\newtheorem{theorem}{Theorem}
\newtheorem{lemma}{Lemma}
\newcommand{\ux}{\underline{x}}
\newcommand{\uy}{\underline{y}}
\newcommand{\bl}{{\bf l}}
\newcommand{\bj}{{\bf j}}
\newcommand{\bC}{{\bf C}}
\newcommand{\C}{\mathbb{C}}
\newcommand{\poz}{\partial_{\overline{z}}}
\newcommand{\R}{\mathbb{R}}
\newcommand{\Li}{{\mbox{Lip}}}
\newcommand{\cS}{\mathcal{S}}
\newcommand{\cP}{\mathcal{P}}
\renewcommand{\iint}{\int_{\Gamma}\int_{\Gamma}}
\newcommand{\dd}{d\tau d\zeta}
\title{Polyanalytic Hardy decomposition of higher order Lipschitz functions}
\author{Ricardo Abreu Blaya$^{1}$, Lianet De la Cruz Toranzo$^{2}$}
\date {\small{{$^1${Facultad de Matem\'aticas, Universidad Aut\'onoma de Guerrero, M\'exico.}\\e-mail: rabreublaya@yahoo.es\\$^2${Facultad de Inform\'atica y Matem\'atica, Universidad de Holgu\'in, Cuba.}\\e-mail: lcruzt@uho.edu.cu}}\\\vspace{.5cm}{\textit{Dedicated to Professor Juan Bory Reyes, the teacher of us both}}}
\begin{document}
\maketitle
\begin{abstract}\noindent
This paper is concerned with the problem of decomposing a higher order Lipschitz function on a closed Jordan curve $\Gamma$ into a sum of two polyanalytic functions in each open domain defined by $\Gamma$. Our basic tools are the Hardy projections related to a singular integral operator arising in polyanalytic function theory, which, as it is proved here, represents an involution operator on the higher order Lipschitz classes. Our result generalizes the classical Hardy decomposition of H\"older continuous functions on the boundary of a domain.
\end{abstract}
\small{
\noindent
\textbf{Keywords.} Polyanalytic functions, higher order Lipschitz classes, Hardy projections.\\
\noindent
\textbf{AMS Classification (2000).} 30G35.}
\section{Introduction}
Decomposing a complex function on a closed Jordan curve $\Gamma$ as the sum of the traces of two holomorphic functions, one in each open domain defined by the curve, is a classical topic from complex analysis and represents a cornerstone in the solutions of Riemann-Hilbert problems. Indeed, such a decomposition features the Hilbert transform (a singular version of the Cauchy transform) whose $\bC^{0,\alpha}$ and $L_p$ boundedness plays a fundamental role in real harmonic analysis \cite{Da,Ga,Mu,StZy}.  \\
\noindent
A higher order Lipschitz data on $\Gamma$ (in the sense of Whitney) is the collection of real-valued continuous functions
\begin{equation}\label{LD}
f:=\{f^{(\bj)},\,|\bj|\le k\}
\end{equation} 
defined on $\Gamma$ and satisfying the compatibility conditions
\begin{equation}\label{C}
|f^{(\bj)}(\ux)-\sum_{|\bj+\bl|\le k}\frac{f^{(\bj+\bl)}(\uy)}{\bl !}(\ux-\uy)^\bl|=\mathcal{O}(|\ux-\uy|^{k+\alpha-|\bj|}),\,\,\ux,\uy\in\Gamma,\,|\bj|\le k.
\end{equation}
Here we use the classical multi-index notation  
\[
\ux^\bl := x_1^{l_1}x_2^{l_2},\,\bj!:=j_1!j_2!,\,|\bj|:=j_1+j_2,\,\bj+\bl=(j_1+l_1,j_2+l_2),
\]
where $\bj= (j_1,j_2)$ and $\bl= (l_1,l_2)$ are $2$-dimensional multi-indices in ${\mathbb{N}}_0^2$.\\
\noindent
A deep theorem from real analysis due to Whitney \cite{Wh}, shows that such a data may be extended to $\R^2$ in a $\bC^{k,\alpha}$-smooth way.\\ 
\noindent
For $k$ a non-negative integer and $0<\alpha\le 1$, the higher order Lipschitz class $\Li(k+\alpha,\Gamma)$ consists of data \eqref{LD} satisfying \eqref{C}. We shall say that a complex data $f=u+iv$ belongs to $\Li(k+\alpha,\Gamma)$  if both $u$ and $v$ do so. In this context, however, this definition can be reformulated in purely complex terms by appealing to the complexified compatibility conditions  
\begin{equation}\label{C2}
|f^{(\bj)}(t)-\sum\limits_{|\bj+\bl|\le k}\frac{f^{(\bj+\bl)}(\tau)}{\bl!}(t-\tau)^{l_1}(\overline{t-\tau})^{l_2}|=\mathcal{O}(|t-\tau|^{k+\alpha-|\bj|}),\,\,t,\tau \in\Gamma,\,|\bj|\le k.
\end{equation}
A useful fact to note is that if $f\in\Li(k+\alpha,\Gamma)$ and we set $\bj=(j_1,j_2)$, with $|\bj|\le k$, then  
\begin{equation}\label{subdata}
f_{(\bj)}:=\{f^{(\bj+\bl)},\,0\le|\bl|\le k-|\bj|\}
\end{equation}  
belongs to $\Li(k+\alpha-|\bj|,\Gamma)$.\\
\noindent
For further use, let us observe that while the superscript $(\bj)$ in $f$ denotes a single element, the subscript denotes rather a sub-collection of $f$. Moreover, the following duality relation holds: 
\begin{equation}\label{dual}
[f_{(\bj)}]^{(\bl)}=f^{(\bj+\bl)}, |\bl|\leq k-|\bj|.
\end{equation}
Recently, a version of the Plemelj-Privalov theorem on the invariance of $\Li(k+\alpha, \Gamma)$, with respect to the principal value singular integral operator 
\begin{equation*}\label{SIO}
[\cS_k f]^{(0,0)}(t):=\sum_{n=0}^{k}\frac{1}{\pi i}\int_{\Gamma}\frac{(\overline{t-\zeta})^{n}}{n!(\zeta - t)}f^{(0,n)}(\zeta)d\zeta,\;t\in\Gamma,
\end{equation*}
related to polyanalytic function theory has been proved in \cite{DAB} (see also \cite{BDA}). This result must be interpreted in the sense that, for $f\in\Li(k+\alpha,\Gamma)$, the collection
\[
\cS_k f=\bigg\{[\cS_k f]^{(\bj)},\,|\bj|\le k\bigg\}:=\bigg\{\dfrac{\bj!}{\pi i}\int_{\Gamma}\dfrac{\sum_{p=j_2}^{k}\binom{p}{j_2}(\overline{t-\zeta})^{p-j_2}{f^{(0,p)}(\zeta)}/{p!}}{(\zeta-t)^{j_1+1}}d\zeta,\,|\bj|\le k\bigg\}
\]
is indeed in $\Li(k+\alpha,\Gamma)$. Here and throughout, $\Gamma$ denotes the boundary of a domain $\Omega\subset\C$ with interior $\Omega_+$ and exterior $\Omega_-.$\\
\noindent
The first aim of this paper is to prove that $\cS_k$ behaves as an involution operator on $\Li(k+\alpha,\Gamma)$, and then take advantage of this fact to define the Plemelj projection operators
\[
\cP_k^+=\frac{1}{2}(I+\cS_k),\,\,\cP_k^-=\frac{1}{2}(I-\cS_k)
\]
giving rise to the Hardy decomposition of $\Li(k+\alpha, \Gamma)$ by polyanalytic functions (of order $k+1$).
\section{Involution property}
As it is well known, a linear operator is an involution if and only if it equals its own inverse. However, a first look at \eqref{SIO} might suggest that the operator $\cS_k$ is not even an injection. Indeed, it is defined only by some elements of the Lipschitz data $f:=\{f^{(\bj)},\,|\bj|\le k\}$. The explanation for this possible misunderstanding lies in the fact that even though a Lipschitz data is not determined by none of its proper sub-collections (see \cite[p. 176]{St}), it can easily be proved that, in the case of a curve or any compact set without isolated points, the elements $\{f^{(0,n)},\,0\le n\le k\}$ actually determine the Lipschitz data $f$.

\noindent
We prove this fact by induction on $k$. Let $k=1$ and suppose $\{f^{(0,0)},f^{(1,0)},f^{(0,1)}\}$ and $\{f^{(0,0)},f_*^{(1,0)},f^{(0,1)}\}$ are two different Lipschitz data in $\Li(1+\alpha,\Gamma)$. Then
\begin{eqnarray*}
(t-\tau)[f_*^{(1,0)}(\tau)-f^{(1,0)}(\tau)]=\\\big[f^{(0,0)}(t)-f^{(0,0)}(\tau)-(t-\tau)f^{(1,0)}(\tau)-(\overline{t-\tau})f^{(0,1)}(\tau)\big]-\\\big[f^{(0,0)}(t)-f^{(0,0)}(\tau)-(t-\tau)f_*^{(1,0)}(\tau)-(\overline{t-\tau})f^{(0,1)}(\tau)\big].
\end{eqnarray*}
By setting $\bj=(0,0)$ in \eqref{C2}, we have
\[
|(t-\tau)(f_*^{(1,0)}(\tau)-f^{(1,0)}(\tau))|=\mathcal{O}(|t-\tau|^{1+\alpha}),\,\,t,\tau\in\Gamma
\]
and hence
\[
|f_*^{(1,0)}(\tau)-f^{(1,0)}(\tau)|=\mathcal{O}(|t-\tau|^{\alpha}),\,\,t,\tau\in\Gamma.
\]
Of course, the last condition forces the identity $f_*^{(1,0)}(\tau)=f^{(1,0)}(\tau),\,\tau\in\Gamma$.\\
Assuming the property to hold for $n=k-1$, we will prove it for $n=k$. Given two Lipschitz data $f:=\{f^{(\bj)},\,|\bj|\le k\}$ and $g:=\{g^{(\bj)},\,|\bj|\le k\}$ satisfying $f^{(0,j_2)}=g^{(0,j_2)}$ for $0\leq j_2\leq k$, we shall prove that actually $f^{(\bj)}=g^{(\bj)}$ for all $|\bj|\leq k$. \\
Let us consider the sub-collections 
\[
f_{(0,1)}=\{f^{(j_1,j_2+1)},\,|\bj|\le k-1\}\,\,\mbox{and}\,\, g_{(0,1)}=\{g^{(j_1,j_2+1)},\,|\bj|\le k-1\}. 
\]
We recall that both $f_{(0,1)}$ and $g_{(0,1)}$ belong to $\Li(k-1+\alpha,\Gamma)$.\\ 
Since $[f_{(0,1)}]^{(0,j_2)}=f^{(0,j_2+1)}$ and $f^{(0,j_2+1)}=g^{(0,j_2+1)}$ for $0\leq j_2+1\leq k$ we have $[f_{(0,1)}]^{(0,j_2)}=[g_{(0,1)}]^{(0,j_2)}$ for $0\leq j_2\leq k-1$. Then, 
by the induction hypothesis, we conclude that $[f_{(0,1)}]^{(\bj)}=[g_{(0,1)}]^{(\bj)}$ for $0\leq |\bj|\leq k-1$, that is, $f^{(j_1,j_2+1)}=g^{(j_1,j_2+1)}$ for $|\bj|\le k-1$, or equivalently $f^{(l_1,l_2)}=g^{(l_1,l_2)}\,\,\mbox{for} \,\,|\bl|\le k,\,l_2\ge 1$. Consequently, we are reduced to proving $f^{(l_1,0)}=g^{(l_1,0)}\,\,\mbox{for} \,\,1\le l_1\le k$.\\
It is easily seen that
\begin{eqnarray}\label{truco}
\sum\limits_{l_1=1}^k\bigg[\frac{g^{(l_1,0)}(\tau)-f^{(l_1,0)}(\tau)}{l_1!}\bigg](t-\tau)^{l_1}=\nonumber\\\bigg[f^{(0,0)}(t)-\sum\limits_{|\bl|\le k}\frac{f^{(\bl)}(\tau)}{\bl!}(t-\tau)^{l_1}(\overline{t-\tau})^{l_2}\bigg]\nonumber\\
-\bigg[g^{(0,0)}(t)-\sum\limits_{|\bl|\le k}\frac{g^{(\bl)}(\tau)}{\bl!}(t-\tau)^{l_1}(\overline{t-\tau})^{l_2}\bigg].
\end{eqnarray}
By setting $\bj=(0,0)$ in \eqref{C2}, we have
\[
\bigg|\sum\limits_{l_1=1}^k\bigg[\frac{g^{(l_1,0)}(\tau)-f^{(l_1,0)}(\tau)}{l_1!}\bigg](t-\tau)^{l_1-1}\bigg|=\mathcal{O}(|t-\tau|^{k-1+\alpha}),\,\,t,\tau\in\Gamma
\]
and hence that $g^{(1,0)}(\tau)=f^{(1,0)}(\tau),\,\tau\in\Gamma$. Our claim follows after one replaces this last equality in \eqref{truco} and repeats this procedure enough times.

\noindent
The proof of the following auxiliary result is straightforward.
\begin{lemma}\label{lema1}
If $t,\tau\in\Gamma$, then $\displaystyle\dfrac{1}{\pi i}\int_{\Gamma}\dfrac{d\zeta}{(\zeta-t)^m(\zeta-\tau)^n}=0$ for all natural numbers $m,n\geq 1$.
\end{lemma}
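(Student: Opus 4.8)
The plan is to treat the integrand as a rational function of $\zeta$ and split it by partial fractions. Since its only finite poles are $t$ and $\tau$, we may write
\[
\frac{1}{(\zeta-t)^m(\zeta-\tau)^n}=\sum_{i=1}^{m}\frac{a_i}{(\zeta-t)^i}+\sum_{j=1}^{n}\frac{b_j}{(\zeta-\tau)^j},
\]
where $a_1=\mathrm{Res}_{\zeta=t}$ and $b_1=\mathrm{Res}_{\zeta=\tau}$ of the integrand (when $t=\tau$ this collapses to a single pole of order $m+n\ge 2$). The single algebraic fact driving everything is that the integrand is rational and decays like $\zeta^{-(m+n)}$ with $m+n\ge 2$ at infinity, so the sum of all its residues is zero; in particular $a_1+b_1=0$. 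I would isolate this observation first.

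Next I would integrate the partial-fraction terms over $\Gamma$ one by one. The principal value $\int_{\Gamma}\frac{d\zeta}{\zeta-a}$ takes one and the same value (namely $\pi i$) for every point $a$ lying on a smooth arc of $\Gamma$, so the two first-order contributions combine to $\pi i\,(a_1+b_1)$, which vanishes by the residue identity above; note that the precise value $\pi i$ is immaterial, only that it is common to $a=t$ and $a=\tau$. Each term of order $i\ge 2$ is the exact derivative of the single-valued rational function $\tfrac{-1}{(i-1)(\zeta-t)^{i-1}}$ (and likewise at $\tau$), and the integral over the closed curve $\Gamma$ of a total derivative of a single-valued function is zero. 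Summing the pieces yields $0$.

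The delicate step, and the one I would treat most carefully, is that $t$ and $\tau$ lie on $\Gamma$ itself, so none of these integrals is an ordinary one: a naive symmetric excision already fails to converge on the even-order terms. The clean resolution is to read the integral as a boundary value of the contour integral obtained by displacing the poles slightly off $\Gamma$. Pushing both $t$ and $\tau$ into $\Omega_+$ (or both into $\Omega_-$) makes the integral equal to $2\pi i$ times a sum of residues; the order-$\ge 2$ terms contribute no residue at all, and the two simple poles contribute residues summing to zero, so the displaced integral vanishes from either side and hence so does the boundary value. I would present this residue/boundary-value formulation precisely because it makes transparent that the on-curve location of $t,\tau$ costs nothing, the only input being that the total residue of a rational function vanishing to order $\ge 2$ at infinity is zero.
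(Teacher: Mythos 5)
Your proof is correct, but there is nothing in the paper to match it against: the authors simply declare the proof of Lemma~\ref{lema1} ``straightforward'' and record no argument. The implicit intended route is presumably the same algebra you begin with---partial fractions together with the standard values of $\frac{1}{\pi i}\int_{\Gamma}(\zeta-a)^{-i}\,d\zeta$, namely $1$ for $i=1$ (at points of $\Gamma$ admitting a tangent) and $0$ for $i\ge 2$---but your organization around the single fact that a rational function decaying like $\zeta^{-(m+n)}$ with $m+n\ge 2$ has vanishing total residue is cleaner: it makes the cancellation $a_1+b_1=0$ structural rather than computational, and it absorbs the degenerate case $t=\tau$ (a single pole of order $\ge 2$, zero residue) with no extra work. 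Your displacement argument also buys something genuine that the paper passes over in silence: for $m\ge 2$ or $n\ge 2$ the on-curve integral is hypersingular and fails to converge as a symmetric principal value, as you correctly observe, so some regularized reading (one-sided boundary value, equivalently a finite part) is forced, and your residue computation shows both one-sided values vanish. The one step worth writing out explicitly is the bridge between the two readings in the case the paper actually uses: the proof of the involution theorem invokes the lemma only with $m=n=1$ and $t\neq\tau$, where the principal value does exist, and you should confirm it agrees with your boundary value. This follows from the Plemelj--Sokhotski formulas applied at each simple pole separately: the jump terms entering the one-sided limits are $\pm\pi i\,(a_1+b_1)$, which vanish by your residue identity, so both one-sided boundary values coincide with the principal value. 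With that sentence added, your argument is complete and, if anything, more careful than what the paper records.
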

\noindent
We can now state and prove the main result of this section.
\begin{theorem}
The operator $\cS_k:\emph{\Li}(k+\alpha,\Gamma)\mapsto \emph{\Li}(k+\alpha,\Gamma)$ is a linear involution. That is,
\begin{equation}\label{invo}
[\cS_k^2f]^{(\bj)}=f^{(\bj)}\,\,\mbox{for all}\,\, |\bj|\leq k.
\end{equation}
\end{theorem}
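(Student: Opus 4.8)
The plan is to lean on the determination principle established just above: two data in $\Li(k+\alpha,\Gamma)$ coincide as soon as their components $\{\,\cdot^{(0,n)},\,0\le n\le k\,\}$ coincide, and both $\cS_k^2 f$ and $f$ lie in $\Li(k+\alpha,\Gamma)$ (the former by invoking the invariance result of \cite{DAB} twice). Hence it suffices to verify the scalar identities $[\cS_k^2 f]^{(0,n)}=f^{(0,n)}$ for $0\le n\le k$, which reduces the multi-index statement \eqref{invo} to a family of one-dimensional computations.

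First I would record the $(0,n)$-component in the streamlined form
\[
[\cS_k f]^{(0,n)}(\tau)=\frac{1}{\pi i}\int_\Gamma\frac{1}{\zeta-\tau}\sum_{q=n}^{k}\frac{(\overline{\tau-\zeta})^{q-n}}{(q-n)!}\,f^{(0,q)}(\zeta)\,d\zeta,
\]
obtained from the defining formula via $n!\binom{q}{n}/q!=1/(q-n)!$. Writing $g:=\cS_k f$ and substituting the analogous expression for $g^{(0,p)}(\tau)$ into $[\cS_k g]^{(0,n)}(t)$ yields an iterated singular integral over $\Gamma\times\Gamma$ with kernel $\frac{1}{(\tau-t)(\zeta-\tau)}$ whose density carries the nested conjugate factors $(\overline{t-\tau})^{p-n}(\overline{\tau-\zeta})^{q-p}$.

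The crucial step is an algebraic simplification of the intermediate summation. Fixing $q$ and summing the coefficient of $f^{(0,q)}(\zeta)$ over the intermediate index $p$ gives, by the binomial theorem,
\[
\sum_{p=n}^{q}\frac{(\overline{t-\tau})^{p-n}}{(p-n)!}\frac{(\overline{\tau-\zeta})^{q-p}}{(q-p)!}=\frac{(\overline{t-\tau}+\overline{\tau-\zeta})^{\,q-n}}{(q-n)!}=\frac{(\overline{t-\zeta})^{q-n}}{(q-n)!},
\]
so that all dependence on $\overline{\tau}$ disappears. The density collapses to the $\tau$-independent expression $\phi_t(\zeta):=\sum_{q=n}^{k}\frac{(\overline{t-\zeta})^{q-n}}{(q-n)!}f^{(0,q)}(\zeta)$, a H\"older function of $\zeta$ with $t$ as a parameter, and $[\cS_k^2 f]^{(0,n)}(t)$ becomes the plain double Cauchy principal value $\frac{1}{(\pi i)^2}\int_\Gamma\frac{d\tau}{\tau-t}\int_\Gamma\frac{\phi_t(\zeta)}{\zeta-\tau}\,d\zeta$, that is, $S^2\phi_t$ evaluated at $t$, where $S$ is the classical Cauchy singular operator.

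It remains to evaluate this via the Poincar\'e--Bertrand transposition formula. The non-commutative (diagonal) term contributes $\phi_t(t)=f^{(0,n)}(t)$, since only $q=n$ survives $\overline{t-t}=0$, while the residual iterated term factors out $\int_\Gamma\frac{d\tau}{(\tau-t)(\zeta-\tau)}$, which vanishes identically by Lemma~\ref{lema1} with $m=n=1$. Thus $[\cS_k^2 f]^{(0,n)}(t)=f^{(0,n)}(t)$, and the determination principle upgrades this to \eqref{invo}. I expect the main obstacle to be not the final cancellation but the bookkeeping leading to it: justifying the interchange of the finite sums with the principal-value integrals, and invoking Poincar\'e--Bertrand in a form valid for these densities — once the telescoping identity has stripped away the conjugate factors, the problem is genuinely scalar and the classical machinery applies verbatim.
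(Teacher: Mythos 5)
Your proposal is correct, and it reaches \eqref{invo} by a genuinely different route than the paper. Both arguments begin identically, reducing via the determination principle to the scalar identities $[\cS_k^2f]^{(0,n)}=f^{(0,n)}$, $0\le n\le k$, and both ultimately rest on the same computational engine: a binomial collapse of the nested conjugate factors, Lemma~\ref{lema1}, and the classical fact that the Cauchy singular operator squares to the identity. The difference is in how the components with $n\ge 1$ are treated. The paper proceeds by induction on $k$, disposing of these components through the structural commutation identities $[\cS_k f]^{(0,j_2)}=\big[\cS_{k-j_2}[f_{(0,j_2)}]\big]^{(0,0)}$ and $[\cS_kf]_{(0,j_2)}=\cS_{k-j_2}[f_{(0,j_2)}]$ together with the duality \eqref{dual}, so that the hands-on singular-integral computation is needed only for the single component $(0,0)$. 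You instead observe that the telescoping identity $\sum_{p=n}^{q}\frac{(\overline{t-\tau})^{p-n}(\overline{\tau-\zeta})^{q-p}}{(p-n)!\,(q-p)!}=\frac{(\overline{t-\zeta})^{q-n}}{(q-n)!}$ works uniformly in $n$ (for $n=0$ it is exactly the identity the paper uses), which lets you handle all components at once with no induction and no commutation identities; the price is that you must invoke the Poincar\'e--Bertrand formula explicitly for the parameter-dependent density $\phi_t$, where the paper instead quotes the classical base case from \cite{Ga} and leaves the transposition implicit --- though in fact the paper's treatment of its $n\ge1$ terms also tacitly requires transposing the iterated principal values before Lemma~\ref{lema1} can be applied, so your version is, if anything, more candid about where Poincar\'e--Bertrand enters. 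Your bookkeeping is sound: the diagonal term $-\pi^2\phi_t(t)$ against the prefactor $(\pi i)^{-2}$ yields $+\phi_t(t)=f^{(0,n)}(t)$ (only $q=n$ survives $\overline{t-t}=0$), and the residual iterated term vanishes by Lemma~\ref{lema1} with $m=n=1$ after the sign flip $\frac{1}{(\tau-t)(\zeta-\tau)}=-\frac{1}{(\tau-t)(\tau-\zeta)}$. In short: your argument is shorter and uniform across components, while the paper's inductive scheme isolates the singular analysis in the classical scalar case and exhibits the compatibility of $\cS_k$ with sub-collections, a structural fact it exploits elsewhere.
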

\begin{proof}
Since $\cS_k^2f\in\Li(k+\alpha,\Gamma)$ is completely determined by its $(0,j_2)$-th components, we only need to show that $[\cS_k^2f]^{(0,j_2)}=f^{(0,j_2)}$ for $0\le j_2\leq k$. The proof is by induction on $k$. \\
The base step $k=0$ is well known in the literature \cite{Ga}. Assuming 
\begin{equation}\label{invo2}
[\cS_n^2f]^{(0,j_2)}=f^{(0,j_2)}, 0\leq j_2\leq n
\end{equation}
to hold for any $n\le k-1$, we will prove it for $n=k$. In fact, for $1\le j_2\leq k$ the identities
\[
[\cS_k f]^{(0,j_2)}=\big[\cS_{k-j_2}[f_{(0,j_2)}]\big]^{(0,0)},\,[\cS_kf]_{(0,j_2)}=\cS_{k-j_2}[f_{(0,j_2)}]
\]
yield 
\begin{eqnarray*}
[\cS_k^2f]^{(0,j_2)}=[\cS_k(\cS_kf)]^{(0,j_2)}&=&\big[\cS_{k-j_2}[(\cS_kf)_{(0,j_2)}]\big]^{(0,0)}\\
&=&\big[\cS_{k-j_2}(\cS_{k-j_2}f_{(0,j_2)})\big]^{(0,0)}=\big[\cS_{k-j_2}^2f_{(0,j_2)}\big]^{(0,0)}.
\end{eqnarray*}
Since $k-j_2<k$, the induction hypothesis and the duality relations \eqref{dual} imply 
\[
[\cS_k^2f]^{(0,j_2)}=\big[\cS_{k-j_2}^2f_{(0,j_2)}\big]^{(0,0)}=[f_{(0,j_2)}]^{(0,0)}=f^{(0,j_2)}.
\]
What is left is to show \eqref{invo2} to hold for $\bj=(0,0)$. Indeed, from the preceding section, we know that
\begin{equation}\label{a}
[\cS_k^2f]^{(0,0)}(t)=\dfrac{1}{\pi i}\int_{\Gamma}\dfrac{\sum_{n=0}^{k}(\overline{t-\zeta})^{n}[\cS_kf]^{(0,n)}(\zeta)/{n!}}{\zeta-t}d\zeta.
\end{equation}
After  substituting $[\cS_kf]^{(0,n)}(\zeta)$ into \eqref{a} we obtain
\begin{eqnarray*}
[\cS_k^2f]^{(0,0)}(t)&=&-\dfrac{1}{\pi^2}\iint\dfrac{\sum_{n=0}^{k}(\overline{\zeta-\tau})^{n} f^{(0,n)}(\tau)/{n!}}{(\tau-\zeta)(\zeta-t)}\dd\\
&&-\dfrac{1}{\pi^2}\sum_{n=1}^{k}\iint\dfrac{\sum_{p=n}^{k}\binom{p}{n}(\overline{t-\zeta})^{n}(\overline{\zeta-\tau})^{p-n} f^{(0,p)}(\tau)/{p!}}{(\tau-\zeta)(\zeta-t)}\dd.
\end{eqnarray*}
On account of the base induction step we have
\[
-\dfrac{1}{\pi^2}\iint\dfrac{f^{(0,0)}(\tau)}{(\tau-\zeta)(\zeta-t)}\dd=f^{(0,0)}(t). 
\]
Thus, the proof is completed by showing that
\begin{eqnarray*}
-\sum_{n=1}^{k}\dfrac{1}{\pi^2}\iint\dfrac{(\overline{\zeta-\tau})^n f^{(0,n)}(\tau)/{n!}}{(\tau-\zeta)(\zeta-t)}\dd-\\\dfrac{1}{\pi^2}\sum_{n=1}^{k}\iint\dfrac{\sum_{p=1}^{n}\binom{n}{p}(\overline{t-\zeta})^{p}(\overline{\zeta-\tau})^{n-p} f^{(0,n)}(\tau)/{n!}}{(\tau-\zeta)(\zeta-t)}\dd=0,
\end{eqnarray*}
but is clear after using the identity 
\[
(\overline{\zeta-\tau})^n +\sum_{p=1}^{n}\binom{n}{p}(\overline{t-\zeta})^{p}(\overline{\zeta-\tau})^{n-p}=\sum_{p=0}^{n}\binom{n}{p}(\overline{t-\zeta})^{p}(\overline{\zeta-\tau})^{n-p}=(\overline{t-\tau})^n
\]
together with Lemma \ref{lema1}. 
\end{proof}
\section{Hardy decomposition}
From what has already been proved, it follows that the operators $\cP_k^+=\frac{1}{2}(I+\cS_k)$ and $\cP_k^-=\frac{1}{2}(I-\cS_k)$
are projections on $\Li(k+\alpha,\Gamma)$, namely
\[
\cP_k^+\cP_k^+=\cP_k^+,\,\cP_k^-\cP_k^-=\cP_k^-,\,\cP_k^+\cP_k^-=0,\,\cP_k^-\cP_k^+=0.
\]
Consequently, 
\[
\Li(k+\alpha,\Gamma)=\Li^+(k+\alpha,\Gamma)\oplus\Li^-(k+\alpha,\Gamma),
\]
where 
\[
\Li^+(k+\alpha,\Gamma):=\mbox{im}\cP_k^+, \Li^-(k+\alpha,\Gamma):=\mbox{im}\cP_k^-.
\]
The remainder of this section will be devoted to characterize $\Li^\pm(k+\alpha,\Gamma)$. 
\noindent
We recall (see for instance \cite{Ba}) that polyanalytic functions of order $k+1$ (briefly, $(k+1)$-analytic) in some open domain $\Omega\subset\R^2$  are the solutions of the iterated complex equation $\poz^{k+1}F=0$. Each $(k+1)$-analytic function in $\Omega$ can be uniquely represented in the form
\begin{equation}\label{compo}
F(z)=\sum_{m=0}^{k}F_m(z)\bar{z}^m,
\end{equation}
where all the $F_m$'s (called the analytic components of $F$) are analytic functions in the same domain. The expression \eqref{compo} can also be regarded as a definition of a polyanalytic function.
\\
\noindent
The following lemma provides a formula for the $F_m$'s in terms of $F$. Surprisingly, we are not aware of any other work that treats these calculations.
\begin{lemma}\label{Fm}
Le be $F$ a $(k+1)$-analytic function represented by \eqref{compo}. Then we have
\[
F_m(z)=\sum_{j=m}^k\frac{(-1)^{j-m}}{(j-m)!m!}\poz^j F(z)\bar{z}^{j-m},\,\,m=0,\dots, k.
\]
\end{lemma}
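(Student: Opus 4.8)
The plan is to differentiate the representation \eqref{compo} a total of $j$ times and then invert the resulting triangular system by direct substitution. Since each analytic component satisfies $\poz F_m=0$, while $\poz^j(\bar z^m)=\frac{m!}{(m-j)!}\bar z^{m-j}$ for $j\le m$ and vanishes for $j>m$, applying $\poz^j$ to \eqref{compo} gives
\[
\poz^j F(z)=\sum_{m=j}^{k}\frac{m!}{(m-j)!}\,F_m(z)\,\bar z^{m-j},\qquad 0\le j\le k.
\]
This expresses the derivatives $\{\poz^j F\}_{j=0}^k$ in terms of the unknown components $\{F_m\}_{m=0}^k$ through a triangular, hence invertible, linear system. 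The content of the lemma is precisely that the inverse of this system is the stated closed formula, and so the natural strategy is to verify the formula by plugging the above expression for $\poz^j F$ into its right-hand side.

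Carrying this out, I would substitute the displayed identity into the claimed expression for $F_m$, writing the inner summation index as $\ell$ to avoid a clash. The right-hand side then becomes
\[
\sum_{j=m}^{k}\frac{(-1)^{j-m}}{(j-m)!\,m!}\,\bar z^{j-m}\sum_{\ell=j}^{k}\frac{\ell!}{(\ell-j)!}\,F_\ell(z)\,\bar z^{\ell-j}.
\]
Interchanging the order of the two finite sums, whose combined constraint is $m\le j\le\ell\le k$, and collecting the common power $\bar z^{\ell-m}$, I obtain
\[
\sum_{\ell=m}^{k}F_\ell(z)\,\bar z^{\ell-m}\,\frac{\ell!}{m!}\sum_{i=0}^{\ell-m}\frac{(-1)^{i}}{i!\,(\ell-m-i)!},
\]
where $i:=j-m$.

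The key and essentially only nontrivial step is evaluating the inner sum. Setting $r:=\ell-m$ and extracting a factor $1/r!$, I would recognize it as a binomial sum,
\[
\sum_{i=0}^{r}\frac{(-1)^{i}}{i!\,(r-i)!}=\frac{1}{r!}\sum_{i=0}^{r}\binom{r}{i}(-1)^{i}=\frac{(1-1)^{r}}{r!},
\]
which equals $1$ when $r=0$ and vanishes when $r\ge 1$. Consequently every term with $\ell>m$ drops out and only the diagonal term $\ell=m$ survives, whose coefficient is $\frac{m!}{m!}\cdot 1=1$, leaving exactly $F_m(z)$. This proves the formula.

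I expect no genuine obstacle here: the argument is purely computational, and the one point requiring care is the bookkeeping of the index ranges in the order-swap $\sum_{j=m}^{k}\sum_{\ell=j}^{k}\to\sum_{\ell=m}^{k}\sum_{j=m}^{\ell}$, after which the identity $\sum_{i=0}^{r}(-1)^{i}\binom{r}{i}=0^{r}$ collapses the whole expression. As a consistency check one may note the top case $m=k$, where only $j=k$ contributes and the formula reduces to $F_k=\frac{1}{k!}\poz^k F$, in agreement with $\poz^k F=k!\,F_k$.
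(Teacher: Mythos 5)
Your proof is correct, and it takes a genuinely different route from the paper. You verify the closed formula directly: differentiating \eqref{compo} gives the triangular system $\poz^j F=\sum_{\ell=j}^{k}\frac{\ell!}{(\ell-j)!}F_\ell\,\bar z^{\ell-j}$, and substituting this into the claimed expression for $F_m$, swapping the two finite sums, and collapsing the inner sum via $\sum_{i=0}^{r}(-1)^i\binom{r}{i}=0$ for $r\ge 1$ leaves exactly the diagonal term $F_m$. The paper instead proceeds by induction on the order $k$: it peels off the top component using $F_{k+1}=\poz^{k+1}F/(k+1)!$, applies the inductive formula to $F-F_{k+1}\bar z^{k+1}$, and then needs the combinatorial identity $\sum_{j=m}^{k+1}\binom{k+1}{j}\binom{j}{m}(-1)^{j-m}=0$ (equation \eqref{comb}) to absorb the correction term. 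Note that both arguments ultimately rest on the same alternating binomial cancellation; yours invokes it in its most transparent form $(1-1)^r=0$, while the paper's appears in the partially summed form \eqref{comb}. What your approach buys is brevity and conceptual clarity: it is a one-shot, induction-free computation that exhibits the lemma's formula as the explicit inverse of the Pascal-type triangular matrix relating $\{F_m\}$ to $\{\poz^j F\}$, with the only delicate point being the bookkeeping of the interchange $\sum_{j=m}^{k}\sum_{\ell=j}^{k}\to\sum_{\ell=m}^{k}\sum_{j=m}^{\ell}$, which you handle correctly. The paper's induction, by contrast, tracks more closely how one might discover the formula degree by degree, but at the cost of an extra combinatorial lemma and a recursion. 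Your consistency check $F_k=\frac{1}{k!}\poz^k F$ matches the top case of the system, and no gap remains.
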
  
\begin{proof}
We proceed by complete induction on $k$. For $k=1$ we have $F(z)=F_0(z)+F_1(z)\bar{z}$, which is clear from \eqref{compo}. A trivial verification shows that 
$F_1(z)=\poz F(z)$. Hence $F_0(z)=F(z)-\poz F(z)\bar{z}$ and we are done.
\\
\noindent
To conclude from $k$ to $k+1$ it is convenient to start from the representation
\[
F(z)=\sum_{m=0}^{k+1}F_m(z)\bar{z}^m,
\]
or equivalently 
\begin{equation*}
F(z)-F_{k+1}(z)\bar{z}^{k+1}=\sum_{m=0}^{k}F_m(z)\bar{z}^m.
\end{equation*}
Following the assumption for $k$ and the fact that $F_{k+1}(z)=\frac{\poz^{k+1}F(z)}{(k+1)!}$ we have
\[
F_m(z)=\sum_{j=m}^k\frac{(-1)^{j-m}}{(j-m)!m!}\poz^j[F(z)-\frac{\poz^{k+1}F(z)\bar{z}^{k+1}}{(k+1)!}]\bar{z}^{j-m},
\] 
which after some simple algebraic transformations becomes
\begin{equation}\label{alg}
F_m(z)=\sum_{j=m}^k\frac{(-1)^{j-m}}{(j-m)!m!}\poz^j F(z)\bar{z}^{j-m}-\frac{\poz^{k+1}F(z)\bar{z}^{k+1-m}}{m!}\sum_{j=m}^k\frac{(-1)^{j-m}}{(j-m)!(k+1-j)!}.
\end{equation} 
If we apply the classical combinatorial identity
\[
\sum_{j=m}^{k+1}\binom{k+1}{j}\binom{j}{m}(-1)^{j-m}=0,
\]
we get
\[
\sum_{j=m}^{k+1}\frac{(-1)^{j-m}}{(j-m)!(k+1-j)!}=\frac{m!}{(k+1)!}\sum_{j=m}^{k+1}\binom{k+1}{j}\binom{j}{m}(-1)^{j-m}=0
\]
and hence
\begin{equation}\label{comb}
\sum_{j=m}^{k}\frac{(-1)^{j-m}}{(j-m)!(k+1-j)!}=-\frac{(-1)^{k+1-m}}{(k+1-m)!}.
\end{equation}
When \eqref{comb} is substituted in \eqref{alg}, our assertion is obtained.
\end{proof}

\noindent
If $F\in \bC^{k+1}(\Omega)\cap \bC^k(\Omega\cup\Gamma)$ is $(k+1)$-analytic in $\Omega_+$, then it admits the Cauchy type representation (see for instance \cite[p. 173]{Ba},\cite{Be1})
\begin{equation}\label{CF}
 F(z)=\sum_{m=0}^{k}\frac{1}{2\pi i}\int_{\Gamma}\frac{(\overline{z-\zeta})^{m}}{m!(\zeta - z)}\partial^{m}_{\overline{\zeta}}F(\zeta)d\zeta,
\end{equation}
for $z\in\Omega_+$.
\\
\noindent
For the purpose of this paper we also need looking for a representation formula of $(k+1)$-analytic functions in $\Omega_-$. To do this, we use the binomial expansion of $(\overline{z-\zeta})^{m}$ together with Lemma \ref{Fm} to obtain:
\begin{equation}\label{CFnew}
 \sum_{m=0}^{k}\frac{1}{2\pi i}\int_{\Gamma}\frac{(\overline{z-\zeta})^{m}}{m!(\zeta - z)}\partial^{m}_{\overline{\zeta}}F(\zeta)d\zeta=\sum_{m=0}^{k}\bigg[\frac{1}{2\pi i}\int_{\Gamma}\frac{F_m(\zeta)}{\zeta-z}d\zeta\bigg]\bar{z}^m.
\end{equation}
\\
\noindent
The desired representation in $\Omega_-$ is established by our next theorem.
\begin{theorem}
Let $F\in \bC^{k+1}(\Omega_-)\cap \bC^k(\Omega_-\cup\Gamma)$ be $(k+1)$-analytic in $\Omega_-$ with bounded analytic components at $\infty$. Then for $z\in\Omega_-$ we have
\begin{equation}\label{fuera}
F(z)=-\sum_{m=0}^{k}\frac{1}{2\pi i}\int_{\Gamma}\frac{(\overline{z-\zeta})^{m}}{m!(\zeta - z)}\partial^{m}_{\overline{\zeta}}F(\zeta)d\zeta+\sum_{m=0}^{k}F_m(\infty)\bar{z}^m.
\end{equation}
\end{theorem}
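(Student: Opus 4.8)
The plan is to reduce the statement to the classical Cauchy representation of ordinary analytic functions in the unbounded domain, exploiting the fact that the bulk of the computation is already contained in the identity \eqref{CFnew}. That identity is obtained by expanding $(\overline{z-\zeta})^{m}$ binomially and invoking Lemma \ref{Fm}; it is therefore a purely algebraic rearrangement of the integrand, valid for every $z\notin\Gamma$, and in particular for $z\in\Omega_-$. Thus it suffices to evaluate, for each analytic component $F_m$, the ordinary Cauchy transform $\frac{1}{2\pi i}\int_{\Gamma}\frac{F_m(\zeta)}{\zeta-z}\,d\zeta$ when $z$ lies in $\Omega_-$.

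The key step is the Cauchy integral formula for the exterior domain: I claim that if $g$ is analytic in $\Omega_-$, continuous up to $\Gamma$, and bounded at $\infty$ (so that $g(\infty):=\lim_{\zeta\to\infty}g(\zeta)$ exists), then
\[
\frac{1}{2\pi i}\int_{\Gamma}\frac{g(\zeta)}{\zeta-z}\,d\zeta=g(\infty)-g(z),\qquad z\in\Omega_-.
\]
To establish this I would fix $z\in\Omega_-$, enclose both $\Gamma$ and $z$ inside a large circle $C_R$, and apply the ordinary Cauchy formula to the bounded region lying between $\Gamma$ and $C_R$, whose positively oriented boundary consists of $C_R$ traversed counterclockwise together with $\Gamma$ traversed clockwise; this yields $g(z)=\frac{1}{2\pi i}\int_{C_R}\frac{g(\zeta)}{\zeta-z}\,d\zeta-\frac{1}{2\pi i}\int_{\Gamma}\frac{g(\zeta)}{\zeta-z}\,d\zeta$. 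Letting $R\to\infty$, I split $\frac{1}{2\pi i}\int_{C_R}\frac{g(\zeta)}{\zeta-z}\,d\zeta=g(\infty)\,\frac{1}{2\pi i}\int_{C_R}\frac{d\zeta}{\zeta-z}+\frac{1}{2\pi i}\int_{C_R}\frac{g(\zeta)-g(\infty)}{\zeta-z}\,d\zeta$; the first term equals $g(\infty)$ since $z$ lies inside $C_R$, while the Laurent expansion of $g$ near $\infty$ gives $g(\zeta)-g(\infty)=\mathcal{O}(|\zeta|^{-1})$, so the remaining term is $\mathcal{O}(R^{-1})$ and vanishes in the limit. This limiting argument is exactly where the hypothesis that the analytic components be bounded at $\infty$ is indispensable, and it is the only genuinely delicate point of the proof.

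It then remains to assemble the pieces. Each analytic component $F_m$ of $F$ is analytic in $\Omega_-$, extends continuously to $\Gamma$ (by Lemma \ref{Fm} it is a fixed linear combination of the boundary traces of $\partial_{\overline{\zeta}}^{\,j}F$, which exist because $F\in\bC^k(\Omega_-\cup\Gamma)$), and is bounded at $\infty$ by assumption; hence the exterior Cauchy formula applies to each $F_m$ and gives $\frac{1}{2\pi i}\int_{\Gamma}\frac{F_m(\zeta)}{\zeta-z}\,d\zeta=F_m(\infty)-F_m(z)$. Substituting this into the right-hand side of \eqref{CFnew} and using the representation \eqref{compo}, I obtain
\[
\sum_{m=0}^{k}\frac{1}{2\pi i}\int_{\Gamma}\frac{(\overline{z-\zeta})^{m}}{m!(\zeta-z)}\partial_{\overline{\zeta}}^{\,m}F(\zeta)\,d\zeta=\sum_{m=0}^{k}\bigl(F_m(\infty)-F_m(z)\bigr)\bar z^{m}=\sum_{m=0}^{k}F_m(\infty)\bar z^{m}-F(z).
\]
Solving this equation for $F(z)$ produces precisely \eqref{fuera}, which completes the argument.
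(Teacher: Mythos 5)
Your proposal is correct and follows essentially the same route as the paper's proof: both reduce \eqref{fuera} to the identity \eqref{CFnew} and then apply the exterior Cauchy formula $\frac{1}{2\pi i}\int_{\Gamma}\frac{F_m(\zeta)}{\zeta-z}\,d\zeta=-F_m(z)+F_m(\infty)$ to each analytic component. The only difference is one of detail, not of method: the paper invokes this exterior formula as classical, whereas you prove it via the large-circle limiting argument and also spell out (via Lemma \ref{Fm} and $F\in\bC^k(\Omega_-\cup\Gamma)$) why each $F_m$ extends continuously to $\Gamma$.
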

\begin{proof}

\noindent
We make use of the Cauchy formula for each analytic bounded component in $\Omega_-$:
\[
\frac{1}{2\pi i}\int_{\Gamma}\frac{F_m(\zeta)}{\zeta-z}d\zeta=-F_m(z)+F_m(\infty).
\]
\noindent
On multiplying through by $\bar{z}^m$ and adding the $m$-terms show that
\[
\sum_{m=0}^{k}\bigg[\frac{1}{2\pi i}\int_{\Gamma}\frac{F_m(\zeta)}{\zeta-z}d\zeta\bigg]\bar{z}^m=-F(z)+\sum_{m=0}^{k}F_m(\infty)\bar{z}^m.
\]

\noindent
Then formula \eqref{fuera} follows directly from \eqref{CFnew}.
\end{proof}
\noindent
In particular, if $F_m(\infty)=0$ for all $m=0,\dots, k$, then formula \eqref{fuera} reads
\begin{equation}\label{fuera0}
F(z)=-\sum_{m=0}^{k}\frac{1}{2\pi i}\int_{\Gamma}\frac{(\overline{z-\zeta})^{m}}{m!(\zeta - z)}\partial^{m}_{\overline{\zeta}}F(\zeta)d\zeta.
\end{equation}
We are now in a position to characterize the spaces $\Li^\pm(k+\alpha,\Gamma)$. 
\begin{theorem} 
The Whitney data $f\in\emph{\Li}(k+\alpha,\Gamma)$ belongs to $\emph{\Li}^+(k+\alpha,\Gamma)$ if and only if there exists a $(k+1)$-analytic function $F$ in $\Omega_+$ which continuously extends to $\Gamma$ together with all its complex partial derivatives $\partial_{\overline{z}}^jF$ for $j\le k$ and such that
\begin{equation}\label{tracecondition}
f^{(0,0)}=F|_\Gamma,\,\,\,f^{(0,j)}=\partial_{\overline{z}}^jF|_\Gamma,\,\,1\le j\le k.
\end{equation}
\end{theorem}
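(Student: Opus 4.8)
The plan is to reduce the whole statement to a single Plemelj--Sokhotski jump formula for a suitable polyanalytic Cauchy transform. First observe that, since $\cP_k^+$ is a projection, membership $f\in\Li^+(k+\alpha,\Gamma)$ is equivalent to the fixed-point condition $\cP_k^+f=f$, i.e. $\cS_kf=f$; and by the determination result established at the start of Section 2 this holds as soon as $[\cS_kf]^{(0,j)}=f^{(0,j)}$ for $0\le j\le k$. Everything therefore comes down to controlling the $(0,j)$-components.

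To that end I would introduce the polyanalytic Cauchy transform
\[
\Phi(z):=\sum_{m=0}^{k}\frac{1}{2\pi i}\int_{\Gamma}\frac{(\overline{z-\zeta})^{m}}{m!(\zeta-z)}f^{(0,m)}(\zeta)\,d\zeta,\qquad z\in\Omega_+.
\]
Expanding $(\overline{z-\zeta})^{m}$ by the binomial theorem in powers of $\bar z$ shows $\Phi(z)=\sum_{m=0}^{k}A_m(z)\bar z^{m}$, where each $A_m$ is an ordinary Cauchy integral and hence analytic in $\Omega_+$; thus $\Phi$ is $(k+1)$-analytic there. Since the kernel $1/(\zeta-z)$ is holomorphic in $z$ while $\poz^{j}(\overline{z-\zeta})^{m}=\frac{m!}{(m-j)!}(\overline{z-\zeta})^{m-j}$, differentiating under the integral gives
\[
\poz^{j}\Phi(z)=\sum_{m=j}^{k}\frac{1}{2\pi i}\int_{\Gamma}\frac{(\overline{z-\zeta})^{m-j}}{(m-j)!(\zeta-z)}f^{(0,m)}(\zeta)\,d\zeta.
\]

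The key step is the boundary behaviour of $\poz^{j}\Phi$. Expanding $(\overline{z-\zeta})^{m-j}$ binomially in $\bar z$ and $\bar\zeta$ reduces each summand to an ordinary Cauchy integral with H\"older density $\bar\zeta^{\,r}f^{(0,m)}(\zeta)$, to which the classical Sokhotski--Plemelj formula applies. Collecting the jump contributions, the binomial sum $\sum_{i}\binom{m-j}{i}(-1)^{m-j-i}$ collapses to $(1-1)^{m-j}$, which vanishes unless $m=j$ and then equals $1$; this isolates exactly $\tfrac12 f^{(0,j)}(t)$ from the jump part, while the principal-value parts reassemble (using $j!\binom{p}{j}/p!=1/(p-j)!$) into precisely $\tfrac12[\cS_kf]^{(0,j)}(t)$. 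Hence
\[
\poz^{j}\Phi\big|_{\Gamma}=\tfrac12\big(f^{(0,j)}+[\cS_kf]^{(0,j)}\big)=[\cP_k^+f]^{(0,j)},\qquad 0\le j\le k,
\]
the boundary limits existing and being H\"older by the same Plemelj--Privalov theory already invoked to make sense of $\cS_k$ on $\Li(k+\alpha,\Gamma)$.

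With this formula in hand both implications are immediate. If $f\in\Li^+(k+\alpha,\Gamma)$, then $\cS_kf=f$, so taking $F:=\Phi$ (which is $(k+1)$-analytic and, by the above, extends continuously to $\Gamma$ together with $\poz^{j}F$ for $j\le k$) yields $\poz^{j}F|_\Gamma=\tfrac12(f^{(0,j)}+f^{(0,j)})=f^{(0,j)}$, which is \eqref{tracecondition}. Conversely, given such an $F$, the Cauchy representation \eqref{CF} together with the trace conditions forces $F\equiv\Phi$ on $\Omega_+$; hence $f^{(0,j)}=\poz^{j}F|_\Gamma=\poz^{j}\Phi|_\Gamma=\tfrac12(f^{(0,j)}+[\cS_kf]^{(0,j)})$, so $[\cS_kf]^{(0,j)}=f^{(0,j)}$ for all $0\le j\le k$, and the determination result gives $\cS_kf=f$, i.e. $f\in\Li^+(k+\alpha,\Gamma)$. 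I expect the sole genuine difficulty to lie in the rigorous justification of the Plemelj jump formula for the higher derivatives $\poz^{j}\Phi$ --- both the differentiation under the integral sign and the existence of the H\"older boundary limits --- the combinatorial collapse itself being routine.
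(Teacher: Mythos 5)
Your proposal is correct and follows essentially the same route as the paper: the polyanalytic Cauchy transform with densities $f^{(0,m)}$, the Sokhotski--Plemelj jump formulas identifying $\poz^{j}\Phi|_{\Gamma}$ with $[\cP_k^+f]^{(0,j)}$, the Cauchy representation \eqref{CF} to force $F\equiv\Phi$ in the converse direction, and the determination of a Whitney data by its $(0,j)$-components to upgrade componentwise equality to $f=\cP_k^+f$. The only differences are organizational and harmless: you run the necessity direction through the fixed-point characterization $\cS_kf=f$ with $\Phi$ built from $f$ itself, whereas the paper writes $f=\cP_k^+g$ and builds the transform from the preimage $g$, and you make explicit the appeal to the determination result that the paper leaves implicit in the step from $f^{(0,j)}=[\cP_k^+f]^{(0,j)}$ to $f=\cP_k^+f$.
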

\begin{proof}
By definition, if $f\in\Li^+(k+\alpha,\Gamma)$ there exists $g\in\Li(k+\alpha,\Gamma)$ such that $f=\frac{1}{2}(g+\cS_kg)$. Explicitly, 
\begin{equation}\label{trace}
f^{(\bj)}(t)=\frac{1}{2}\bigg(g^{(\bj)}(t)+\dfrac{\bj!}{\pi i}\int_{\Gamma}\dfrac{\sum_{p=j_2}^{k}\binom{p}{j_2}(\overline{t-\zeta})^{p-j_2}{g^{(0,p)}(\zeta)}/{p!}}{(\zeta-t)^{j_1+1}}d\zeta\bigg), |\bj|\leq k.
\end{equation}
Let us introduce the function
\[
F(z)=\dfrac{1}{2\pi i}\sum_{p=0}^{k}\int_{\Gamma}\frac{(\overline{z-\zeta})^{p}g^{(0,p)}(\zeta)}{p!(\zeta-z)}d\zeta,\,\,z\in\Omega_+.
\]
Of course, $F$ is $(k+1)$-analytic in $\Omega_+$. On the other hand, the smoothness of the kernels $\frac{(\overline{z-\zeta})^{p}}{p!(\zeta-z)}$ for $p\ge 1$ together with the classical Plemelj-Sokhotski formulas imply that $F(z)$ has a continuous trace on $\Gamma$ given by
\[
F(t)=\lim_{\Omega_+\ni z\to t}F(z)=\frac{1}{2}\bigg(g^{(0,0)}(t)+\dfrac{1}{\pi i}\sum_{p=0}^{k}\int_{\Gamma}\frac{(\overline{t-\zeta})^{p}g^{(0,p)}(\zeta)}{p!(\zeta-t)}d\zeta\bigg),\,\,t\in\Gamma,
\]
which, by setting $(\bj)=(0)$ in \eqref{trace}, gives $F(t)=f^{(0,0)}(t)$ in $\Gamma$. \\
In the same manner we can see that
\[
\partial_{\overline{z}}^j F(z)=\dfrac{1}{2\pi i}\sum_{p=j}^{k}\int_{\Gamma}\frac{(\overline{z-\zeta})^{p-j}g^{(0,p)}(\zeta)}{(p-j)!(\zeta-z)}d\zeta
\]
is a polyanalytic function (of order $k+1-j$) in $\Omega_+$ having a continuous trace on $\Gamma$ given by
\[
\partial_{\overline{z}}^j F(t)=\frac{1}{2}\bigg(g^{(0,j)}(t)+\dfrac{1}{\pi i}\sum_{p=j}^{k}\int_{\Gamma}\frac{(\overline{t-\zeta})^{p-j}g^{(0,p)}(\zeta)}{(p-j)!(\zeta-t)}d\zeta\bigg)=f^{(0,j)}(t),\,\,t\in\Gamma,
\]
which proves the necessity. Now suppose that such a polyanalytic function $F$ exists and satisfies \eqref{tracecondition}. Then, the Cauchy integral formula \eqref{CF} gives
\[
F(z)=\dfrac{1}{2\pi i}\sum_{p=0}^{k}\int_{\Gamma}\frac{(\overline{z-\zeta})^{p}f^{(0,p)}(\zeta)}{p!(\zeta-z)}d\zeta,\,\,z\in\Omega_+
\]
and hence
\begin{equation}\label{CFj}
\partial_{\overline{z}}^{j} F(z)=\dfrac{1}{2\pi i}\sum_{p=j}^{k}\int_{\Gamma}\frac{(\overline{z-\zeta})^{p-j}f^{(0,p)}(\zeta)}{(p-j)!(\zeta-z)}d\zeta,\,\,z\in\Omega_+.
\end{equation}
The proof is completed by showing that $f^{(0,j)}=[\cP_k^+(f)]^{(0,j)}$ for all $0\le j\le k$. Indeed, in that case, $f=\cP_k^+(f)\in\Li^+(k+\alpha,\Gamma)$ as claimed.
\\
We have
\begin{eqnarray*}
[\cP_k^+(f)]^{(0,j)}(t)=\frac{1}{2}\bigg(f^{(0,j)}(t)+[\cS_k(f)]^{(0,j)}(t)\bigg)=\\\frac{1}{2}\bigg(f^{(0,j)}(t)+\dfrac{1}{\pi i}\sum_{p=j}^{k}\int_{\Gamma}\frac{(\overline{t-\zeta})^{p-j}f^{(0,p)}(\zeta)}{(p-j)!(\zeta-t)}d\zeta\bigg)\\
=\lim_{\Omega_+\ni z\to t}\dfrac{1}{2\pi i}\sum_{p=j}^{k}\int_{\Gamma}\frac{(\overline{z-\zeta})^{p-j}f^{(0,p)}(\zeta)}{(p-j)!(\zeta-z)}d\zeta.
\end{eqnarray*}
Therefore $[\cP_k^+(f)]^{(0,j)}(t)=f^{(0,j)}(t)$ by \eqref{tracecondition} and \eqref{CFj}. 
\end{proof}
\noindent
The proof of the following result is similar and will be omitted.
\begin{theorem} 
The Whitney data $f\in\emph{\Li}(k+\alpha,\Gamma)$ belongs to $\emph{\Li}^-(k+\alpha,\Gamma)$ if and only if there exists a $(k+1)$-analytic function $F$ in $\Omega_-$ which continuously extends to $\Gamma$ together with all its complex partial derivatives $\partial_{\overline{z}}^jF$ for $j\le k$, with $F_j(\infty)=0,\,j=0,\dots,k$ and
\[
f^{(0,0)}=F|_\Gamma,\,\,\,f^{(0,j)}=\partial_{\overline{z}}^jF|_\Gamma,\,\,1\le j\le k.
\]
\end{theorem}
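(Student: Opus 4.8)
\noindent
The plan is to follow the proof of the preceding theorem almost verbatim, replacing the interior Cauchy representation \eqref{CF} by the exterior representation \eqref{fuera0} and reversing the sign of the Plemelj--Sokhotski jump. For the necessity, I would start from the fact that $f=\cP_k^-(g)=\frac{1}{2}(g-\cS_k g)$ for some $g\in\Li(k+\alpha,\Gamma)$, written componentwise exactly as in \eqref{trace} but with a minus sign in front of the singular integral, and then introduce the function
\[
F(z)=-\dfrac{1}{2\pi i}\sum_{p=0}^{k}\int_{\Gamma}\frac{(\overline{z-\zeta})^{p}g^{(0,p)}(\zeta)}{p!(\zeta-z)}d\zeta,\qquad z\in\Omega_-,
\]
which differs from the interior construction only by the overall sign. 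This $F$ is $(k+1)$-analytic in $\Omega_-$, and by the same binomial expansion together with Lemma \ref{Fm} that produced \eqref{CFnew}, its analytic components are (up to sign) ordinary Cauchy integrals over the bounded curve $\Gamma$; each such integral tends to $0$ as $z\to\infty$, which already secures the required condition $F_m(\infty)=0$.

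\noindent
Next I would compute the exterior boundary values of $\poz^j F$. The only substantive departure from the interior argument is that the Plemelj--Sokhotski formulas now contribute the jump $-\frac12$ in place of $+\frac12$; combined with the leading minus sign in the definition of $F$, this reproduces precisely the $(0,j)$-component of $\cP_k^-(g)$, so that $\poz^j F|_\Gamma=f^{(0,j)}$ for $0\le j\le k$. Together with $F|_\Gamma=f^{(0,0)}$ this gives the stated trace conditions and completes the necessity.

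\noindent
For the sufficiency, suppose a $(k+1)$-analytic $F$ in $\Omega_-$ with $F_m(\infty)=0$ and the stated traces is given. Here the vanishing of the components at infinity is exactly what allows me to invoke \eqref{fuera0} rather than the general formula \eqref{fuera}; substituting the trace conditions $\partial^{p}_{\overline{\zeta}}F|_\Gamma=f^{(0,p)}$ and differentiating yields
\[
\poz^{j}F(z)=-\dfrac{1}{2\pi i}\sum_{p=j}^{k}\int_{\Gamma}\frac{(\overline{z-\zeta})^{p-j}f^{(0,p)}(\zeta)}{(p-j)!(\zeta-z)}d\zeta,\qquad z\in\Omega_-.
\]
Passing to the exterior limit, again with the $-\frac12$ jump, and comparing with $\poz^j F|_\Gamma=f^{(0,j)}$ forces $[\cS_k f]^{(0,j)}=-f^{(0,j)}$, whence $[\cP_k^-(f)]^{(0,j)}=\frac{1}{2}(f^{(0,j)}-[\cS_k f]^{(0,j)})=f^{(0,j)}$ for every $0\le j\le k$. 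Since the $(0,j_2)$-components determine the whole Whitney datum, as established at the beginning of Section 2, and $\cP_k^-(f)\in\Li(k+\alpha,\Gamma)$, this forces $f=\cP_k^-(f)\in\mbox{im}\,\cP_k^-=\Li^-(k+\alpha,\Gamma)$.

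\noindent
The differentiation under the integral sign and the boundary-value computations are routine and identical to the interior case. The one point that genuinely demands care, and the main obstacle, is the bookkeeping of signs: the exterior orientation flips the jump term, and one must check that this flip is consistent both with the minus sign chosen in the definition of $F$ (in the necessity part) and with the reduction to \eqref{fuera0} under the hypothesis $F_m(\infty)=0$ (in the sufficiency part). Aligning these signs correctly is precisely what distinguishes $\cP_k^-$ from $\cP_k^+$ and pins down the condition $F_m(\infty)=0$ as the exterior counterpart of the interior analyticity.
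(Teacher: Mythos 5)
Your proposal is correct and takes exactly the route the paper intends: the authors omit this proof precisely because it is the interior argument repeated with the exterior representation \eqref{fuera0} in place of \eqref{CF} and the $-\tfrac{1}{2}$ Plemelj--Sokhotski jump, which is what you carry out, with the signs and the role of $F_m(\infty)=0$ handled correctly in both directions. Nothing further is needed.
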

\noindent
Finally, it should be noted that from what has already been proved, given a Whitney data $f\in\Li(k+\alpha,\Gamma)$, the function 
\[
F(z)=\dfrac{1}{2\pi i}\sum_{p=0}^{k}\int_{\Gamma}\frac{(\overline{z-\zeta})^{p}f^{(0,p)}(\zeta)}{p!(\zeta-z)}d\zeta,\,\,z\in\Omega_+.
\]
is the unique solution of the boundary value problem
\[
\begin{cases}
\poz^{k+1}F(z)=0,\,&z\in\Omega_+\cup\Omega_-,\\
{[\poz^j F]}^+(t)-{[\poz^j F]}^-(t)=f^{(0,j)}(t),\,&t\in\Gamma,\,j=0,\dots, k,\\
F_j(\infty)=0,\,&j=0,\dots, k.
\end{cases}
\]
\section*{Acknowledgments}
The authors gratefully acknowledge the many helpful suggestions and remarks of Prof. Antonio Galbis Verd\'u during the preparation of the manuscript.

\end{document}